\documentclass[12pt,usenames,dvipsnames]{amsart}
\usepackage{amsmath,amssymb,amsthm}
\usepackage[section]{algorithm}
\usepackage{algorithmic}
\usepackage[inline]{enumitem}
\setlength{\parindent}{0.9em}
\usepackage{graphicx,setspace}
\usepackage{tikz}
\usetikzlibrary{arrows}
\usepackage{wasysym}
\usepackage[makeroom]{cancel}
\singlespacing
\usepackage{fullpage}
\usepackage{comment}
\usepackage{enumitem}
\usepackage{caption}
\usepackage{hyperref}
\usepackage{url}
\allowdisplaybreaks
\usepackage{amsmath,amsfonts,amsthm,verbatim, graphicx,tikz}
\usepackage{amssymb, mathrsfs, centernot}
\usetikzlibrary{shapes.geometric,decorations.pathreplacing,angles,quotes}
\newtheorem{thm}{Theorem}[section]
\newtheorem{lem}[thm]{Lemma}

\newtheorem{conj}{Conjecture}
\newtheorem{cor}[thm]{Corollary}

\newtheorem{ques}[conj]{Question}
\theoremstyle{definition}
\newtheorem{defn}{Definition}[section]

\theoremstyle{remark}
\newtheorem{rmk}[thm]{Remark}

\newcommand{\integers}{\mathbb{Z}}
\newcommand{\ints}{\mathbb{Z}}

\newcommand{\nats}{\mathbb{N}}

\newcommand{\sfs}{\text{sfs}}

\definecolor{lightgray}{rgb}{0.75,0.75,0.75} 
\begin{document}
	\title{On an Algorithm for Comparing the Chromatic Symmetric Functions of Trees}
	
	\author[S. Heil]{S.\ Heil}
	\address{Department of Mathematics\\ 
		Washington University in St. Louis\\ St. Louis, MO 63130, USA}
	\email{\url{sheil@wustl.edu}}
	
	\author[C. Ji]{C. Ji}
	\address{Department of Mathematics\\ 
		Washington University in St. Louis\\ St. Louis, MO 63130, USA}
	\email{\url{caleb.ji@wustl.edu}}
	
	\subjclass[2010]{05C05, 05C15, 05C31, 05C85, 05E05} 
	\keywords{Symmetric Functions, Trees, Chromatic Symmetric Function, Probabilistic Algorithms} 
	\maketitle
	
	\begin{abstract} 
	    It is a long-standing question of Stanley whether or not the chromatic symmetric function (CSF) distinguishes unrooted trees.  Previously, the best computational result proved that it distinguishes all trees with at most $25$ vertices \cite{Russell}.  In this paper, we present a novel probabilistic algorithm which may be used to check more efficiently that the CSF distinguishes a set of trees.  Applying it, we verify that the CSF distinguishes all trees with up to $29$ vertices.  
	\end{abstract}
	\section{Introduction}
	Richard Stanley asked in \cite{Stanley} whether the chromatic symmetric function (CSF) distinguishes unrooted trees. Since then, it has been proven that the CSF distinguishes all trees in each of several subclasses (\cite{Aliste}, \cite{Martin}, \cite{Orellana}). Tan (\cite{Tan}) and independently Smith, Smith, and Tian (\cite{Smith}) have computationally verified that the CSF distinguishes all trees on at most $23$ vertices, and Russell (\cite{Russell}) has shown computationally that it distinguishes all trees on at most $25$ vertices. 
	
	When expressed with respect to commonly-used bases for the space of symmetric functions, the chromatic symmetric function of an arbitrary tree on $n$ vertices contains a number of distinct terms equal to the number of partitions of $n$, growing super-polynomially with $n$. Therefore, computing the chromatic symmetric function directly requires a super-polynomial number of operations, making verification of Stanley's conjecture for trees on $n$ vertices computationally difficult as $n$ increases. 
	
	We present a probabilistic polynomial time algorithm for determining whether two trees $S$ and $T$ on $n$ vertices have equal chromatic symmetric functions without explicitly computing the chromatic symmetric functions $X_S$ and $X_T$. If in fact $X_S \not= X_T$, with probability at least $1 - \frac{1}{2^k}$ this algorithm returns a proof that $X_S \not= X_T$ in $O(kn^3)$ time, where $k$ is a specified accuracy parameter. Using a variant of this algorithm, we verify computationally that Stanley's chromatic symmetric function distinguishes all trees on at most $29$ vertices up to isomorphism. 
	\section{Preliminaries}\label{sec2}
	We use the notation for symmetric functions found in \cite{Stanley}. 
	\begin{defn}\label{def:pbasis}
		The $i$th \textit{power-sum symmetric function} is defined by \[ p_i(x_1,x_2,\dots) = \sum_{j=1}^\infty x_j^i. \]
		For a partition $\lambda = (\lambda_1, \dots, \lambda_k) \vdash n$, one writes $p_\lambda = \prod_{i=1}^k p_{\lambda_i}$.
	\end{defn}
	\begin{defn}
	    Given a graph $G = (V(G), E(G)),$ a \textit{proper coloring} of $G$ is defined to be a mapping $\kappa : V(G) \to \nats$ such that, for any $u,v \in V(G),$ if $uv \in E(G)$ then $\kappa(u) \not= \kappa(v)$.
	\end{defn}
	\begin{defn}(Stanley, \cite{Stanley}, Definition 2.1)
	    For a graph $G$, Stanley defined the \textit{chromatic symmetric function} of $G$, denoted $X_G$, in \cite{Stanley} as follows: \[ X_G = \sum_{\kappa} \prod_{v \in V(G)} x_{\kappa(v)}, \] where $x_1, x_2, \dots $ are commuting indeterminants and the sum is taken over all proper colorings $\kappa$ of $G$.
	    
	    Additionally, when the graph $G$ is understood, for any proper coloring $\kappa$ we let $x_\kappa = \prod_{v \in V(G)} x_{\kappa(v)}$.
	\end{defn}
	\section{A Probabilistic Algorithm for Distinguishing Chromatic Symmetric Functions}\label{sec3}
	\begin{defn}Let $G$ be a graph on $n$ vertices, and let $S_{v,c}$ be the set of all proper colorings of $G$ such that the color of vertex $v$ is $c$. Then, for each vertex $v$ of $G$ and each color $c$, we define the function $Z_G^v(c)$ so that $$Z_G^v(c) = \sum_{\kappa \in S_{v,c}} \prod_{u \in G} x_{\kappa(u)}.$$ We will let $Y_G^v(c) = X_G-Z_G^v(c)$ be the sum of all terms $x_\kappa = \prod_{u \in G} x_{\kappa(u)}$ in the CSF $X_G$ such that $\kappa(v) \not= c$. When the vertex $v$ is implied by context (such as when $G$ is a rooted tree and $v$ is the root of $G$), we will simply write $Z_G(c)$ and $Y_G(c)$ for $Z_G^v(c)$ and $Y_G^v(c)$, respectively.
	\end{defn}
	The following lemma follows directly from the definitions.
	\begin{lem}\label{lem:csf-Zsplit}
		Let $G$ be a graph on the vertex set $\{ 1, 2, \dots, n\}$, and let $v$ be a vertex of $G$. Then, we have $$\sum_{c=1}^\infty Z_G^v(c) = X_G.$$
	\end{lem}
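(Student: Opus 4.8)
The plan is to observe that the collection $\{S_{v,c}\}_{c=1}^\infty$ forms a partition of the set of all proper colorings of $G$, and then to rearrange the defining sum for $\sum_c Z_G^v(c)$ accordingly so that it collapses to the definition of $X_G$. This is a bookkeeping argument, so I do not expect any genuine obstacle; the only point requiring a word of care is that the sum over colors is infinite.

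First I would fix the vertex $v$ and note that any proper coloring $\kappa$ of $G$ is in particular a map $V(G)\to\nats$, so $\kappa(v)$ is a single well-defined positive integer. Hence $\kappa$ belongs to $S_{v,c}$ for exactly one value of $c$, namely $c=\kappa(v)$. This shows that the sets $S_{v,1},S_{v,2},\dots$ are pairwise disjoint and that their union is precisely the set of all proper colorings of $G$.

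Next I would substitute the definition of $Z_G^v(c)$ and interchange the two summations:
\[
\sum_{c=1}^\infty Z_G^v(c) \;=\; \sum_{c=1}^\infty \;\sum_{\kappa \in S_{v,c}} \;\prod_{u \in V(G)} x_{\kappa(u)} \;=\; \sum_{\kappa} \;\prod_{u \in V(G)} x_{\kappa(u)},
\]
where the last sum ranges over all proper colorings $\kappa$ of $G$, by the partition observation of the previous paragraph. The interchange is legitimate because all terms are monomials with nonnegative integer coefficients, so the expressions are being manipulated in the ring of formal power series (equivalently, the coefficient of each fixed monomial $\prod x_i^{a_i}$ is a sum of nonnegative integers, which may be reassociated freely); I would include one sentence to this effect. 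The right-hand side is exactly Stanley's definition of $X_G$, which completes the proof.

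If one prefers to avoid even mentioning rearrangement, an alternative is to track a single monomial: for any sequence of exponents, the number of proper colorings realizing that monomial equals the sum over $c$ of the number of proper colorings in $S_{v,c}$ realizing it, since each such coloring lies in exactly one $S_{v,c}$. Either way the argument is short, and the ``hard part'' is really just making the indexing explicit.
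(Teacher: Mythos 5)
Your argument is correct and is exactly the routine unwinding of definitions the paper has in mind: the paper simply asserts that this lemma ``follows directly from the definitions'' and gives no proof, and your partition-of-colorings observation plus interchange of summation is the intended content. Nothing further is needed.
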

	The function $Z_G^v(c)$ simplifies the task of finding the chromatic symmetric function $X_G$ for a graph $G$ by reducing it to cases in which the color of a certain vertex of the graph is fixed. This is particularly helpful for trees, since we can reconstruct $Z_G(c)$ for a tree given the corresponding information about its subtrees.
	
	\begin{lem}\label{lem:csf-decomp}
		Let $T$ be a tree rooted at vertex $v$, and let $v_1$ through $v_k$ be the vertices of $T$ adjacent to $v$. For each $1 \le i \le k$, let $T_i$ be the connected component of $T \setminus \{\{v, v_i\}\}$ containing $v_i$. Then $$Z_T(c) = x_c \prod_{i=1}^k (X_{T_i} - Z_{T_i}(c)) = x_c\prod_{i=1}^k Y_{T_i}(c).$$
	\end{lem}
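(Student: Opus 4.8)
The plan is to set up a weight-preserving bijection between the proper colorings of $T$ that color the root $v$ with $c$ and the tuples of proper colorings of $T_1, \dots, T_k$ in which $v_i$ avoids $c$, and then to rephrase this bijection as the asserted product of generating functions. All the work is combinatorial; the generating-function statement will fall out formally.

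First I would isolate the structural input coming from $T$ being a tree. Removing the $k$ edges $\{v, v_i\}$ (for $1 \le i \le k$) leaves exactly $k + 1$ connected components, namely the singleton $\{v\}$ and the subtrees $T_1, \dots, T_k$. Consequently $V(T) = \{v\} \sqcup V(T_1) \sqcup \cdots \sqcup V(T_k)$ and $E(T) = \{\, \{v, v_i\} : 1 \le i \le k \,\} \sqcup E(T_1) \sqcup \cdots \sqcup E(T_k)$; in particular no edge of $T$ joins two distinct $T_i$ and $T_j$, and the unique edge of $T$ from $v$ into $V(T_i)$ is $\{v, v_i\}$. Given a proper coloring $\kappa$ of $T$ with $\kappa(v) = c$, its restriction $\kappa_i := \kappa|_{V(T_i)}$ is a proper coloring of $T_i$, and the edge $\{v, v_i\}$ forces $\kappa_i(v_i) \ne c$. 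Conversely, any choice of proper colorings $\kappa_i$ of $T_i$ with $\kappa_i(v_i) \ne c$ glues with the assignment $v \mapsto c$ to a proper coloring of $T$, precisely because the edge description above shows there are no further adjacencies to check. These constructions are mutually inverse, so $\kappa \mapsto (\kappa_1, \dots, \kappa_k)$ is a bijection.

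It remains to pass to generating functions. Under this bijection the monomial splits, by the disjoint decomposition of $V(T)$, as $x_\kappa = x_c \prod_{i=1}^k x_{\kappa_i}$. Summing over all proper colorings $\kappa$ of $T$ with $\kappa(v) = c$ and regrouping over the independent choices of the $\kappa_i$,
\[
Z_T(c) = \sum_{\kappa:\,\kappa(v) = c} x_\kappa = x_c \prod_{i=1}^k \Bigg( \sum_{\substack{\kappa_i \text{ a proper coloring of } T_i \\ \kappa_i(v_i) \ne c}} x_{\kappa_i} \Bigg) = x_c \prod_{i=1}^k Y_{T_i}(c),
\]
the last equality holding because the $i$th inner sum is by definition $Y_{T_i}^{v_i}(c)$, which equals $X_{T_i} - Z_{T_i}(c)$. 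The only point deserving a word of care is the middle equality: interchanging the (formally infinite) sum over $\kappa$ with the finite product and rearranging it into an iterated sum over the tuples $(\kappa_1, \dots, \kappa_k)$. This is harmless in the ring of formal power series — for each fixed monomial the colorings contributing it form a product set, so the step is just the distributive law, with no analytic convergence to worry about. Thus the bijection carries essentially all the content, and once $E(T)$ is pinned down via the tree structure the argument goes through immediately.
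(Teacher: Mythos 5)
Your proof is correct and follows essentially the same route as the paper: both identify proper colorings of $T$ with $\kappa(v)=c$ as exactly the tuples of proper colorings of the subtrees $T_i$ avoiding $c$ at $v_i$, and then read off the product formula. Your version is somewhat more explicit in pinning down the edge decomposition and the bijection, and in justifying the formal interchange of sum and product, but the substance is identical.
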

	\begin{proof}
	    If vertex $v$ has color $c$ in a proper coloring of $T$, then each of the connected components $T_i$ can be viewed as a subtree with root $v_i$.  In each of these proper colorings, $v_i$ cannot have color $c$, so every valid coloring where vertex $v$ has color $c$ is contained in the set of colorings implied by the monomials in the expansion of $x_c \prod_{i=1}^k (X_{T_i} - Z_{T_i}(c))$.  We claim that all of these monomials do in fact correspond to valid colorings.  Indeed, it is clear that none of the edges contained within any of the $T_i$ belong to vertices of the same color.  Every other edge is from the root $v$ to some $v_i$, and we know that none of the $v_i$ are the same color as $c$.  Thus the claim is true, so $Z_T(c) = x_c \prod_{i=1}^k (X_{T_i} - Z_{T_i}(c)) = x_c\prod_{i=1}^k Y_{T_i}(c)$ as desired.

	\end{proof}
	
	\begin{lem}\label{lem:csf-colorsplit}
		Let $T$ be a tree on $n$ vertices rooted at $v$. Then, there exists a unique $n$-tuple of symmetric functions, $(F_1, \dots, F_n)$, each in the indeterminants $x_1, x_2, \dots$, such that, for any $c\in \nats$, \[ Z_T(c) = \sum_{i=1}^n x_c^i F_i. \]
	\end{lem}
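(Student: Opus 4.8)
The plan is to establish existence by induction on $n=|V(T)|$, leaning on the decomposition in Lemma~\ref{lem:csf-decomp}, and then to prove uniqueness separately by a divisibility-and-degree argument. The base case $n=1$ is immediate: the only proper coloring of the one-vertex tree assigns the color $c$ to $v$, so $Z_T(c)=x_c$ and one takes $F_1=1$, $F_2=\cdots=F_n=0$. (Combinatorially the statement is unsurprising — grouping the colorings in $S_{v,c}$ according to the number $i$ of vertices receiving color $c$ already exhibits $Z_T(c)$ as a polynomial in $x_c$ of degree at most $n$ with zero constant term — but that grouping produces coefficients that are a priori symmetric only in the variables other than $x_c$, so turning the coefficients into honest symmetric functions is really the content, and the induction is what supplies it cleanly.)

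For the inductive step I would root $T$ at $v$, let $v_1,\dots,v_k$ be its neighbours, and let $T_i$ be the component of $T\setminus\{\{v,v_i\}\}$ containing $v_i$, so that $n_i:=|V(T_i)|<n$ and $\sum_i n_i=n-1$. Applying the inductive hypothesis to each $T_i$ rooted at $v_i$ gives symmetric functions $F^{(i)}_1,\dots,F^{(i)}_{n_i}$ with $Z_{T_i}(c)=\sum_{j=1}^{n_i}x_c^{\,j}F^{(i)}_j$ for every $c$. Since $X_{T_i}$ is itself a symmetric function, $Y_{T_i}(c)=X_{T_i}-Z_{T_i}(c)$ is then a polynomial in $x_c$ of degree at most $n_i$ whose coefficients lie in the ring $\Lambda$ of symmetric functions. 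Substituting these into the identity $Z_T(c)=x_c\prod_{i=1}^k Y_{T_i}(c)$ of Lemma~\ref{lem:csf-decomp} and multiplying out, one gets $Z_T(c)=\sum_\ell x_c^{\,\ell}F_\ell$ with each $F_\ell$ a fixed, $c$-independent $\Lambda$-combination of products of the $X_{T_i}$ and the $F^{(i)}_j$, hence again a symmetric function. The leading $x_c$ forces $\ell\ge1$ and the total $x_c$-degree $1+\sum_i n_i=n$ forces $\ell\le n$, which closes the induction; note the $F_\ell$ so produced work simultaneously for all $c$, as required.

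For uniqueness, suppose $(F_1,\dots,F_n)$ and $(F_1',\dots,F_n')$ both satisfy the conclusion; setting $H_\ell=F_\ell-F_\ell'$ and fixing any single $c$, I would need to show $\sum_{\ell=1}^n x_c^{\,\ell}H_\ell=0$ forces all $H_\ell=0$. Comparing homogeneous components reduces to the case where each $H_\ell$ is homogeneous of some degree $d-\ell$; since a symmetric function of degree less than $N$ is determined by its image in $N$ variables, one may pass to $\mathbb{Q}[x_1,\dots,x_N]$ with $N>d$. Expanding $H_\ell=\sum_{j\ge0}x_c^{\,j}A_{\ell,j}$ with each $A_{\ell,j}$ symmetric in the remaining variables and collecting powers of $x_c$ yields $\sum_{\ell=1}^{\min(m,n)}A_{\ell,\,m-\ell}=0$ for every $m\ge1$. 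Taking $m=1$ gives $H_1|_{x_c=0}=0$, hence $x_c\mid H_1$, hence (by the symmetry of $H_1$) $x_1\cdots x_N\mid H_1$, impossible for a nonzero polynomial of degree $d-1<N$; so $H_1=0$. Feeding this back, the $m=2$ relation collapses to $H_2|_{x_c=0}=0$, so $H_2=0$, and inductively $H_\ell=0$ for all $\ell$.

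The step I expect to be the real obstacle is uniqueness, and specifically the assertion buried inside it that $1,x_c,x_c^2,\dots$ are linearly independent over $\Lambda$: this is exactly what the divisibility-and-degree argument secures, and it is not automatic, since $x_c$ is itself one of the variables the symmetric functions involve. A secondary point requiring care is that $Z_T(c)$ and the $F_\ell$ naturally live in a power series ring in infinitely many variables (as $Z_T(c)$ is only symmetric away from $x_c$), which is why I would make the reductions to finitely many variables and to homogeneous components explicit; since everything in sight is homogeneous of degree $n$, no genuine completion issues arise.
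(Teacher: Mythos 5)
Your argument is correct and follows essentially the same route as the paper: existence by induction on $n$ via the decomposition $Z_T(c)=x_c\prod_i Y_{T_i}(c)$ of Lemma~\ref{lem:csf-decomp}, and uniqueness via the observation that a nonzero symmetric function cannot be divisible by a single variable $x_c$. Your uniqueness paragraph merely spells out (via reduction to homogeneous pieces in finitely many variables and the $x_1\cdots x_N$ divisibility bound) the step the paper states tersely as ``$F_j-G_j$ is a nonzero symmetric function, hence not divisible by $x_c$.''
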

	\begin{proof}
		First, we will prove that such an $n$-tuple of symmetric functions must exist. 
		
		We proceed by induction on $n$. 
		
		For the base case, $n=1$, $T$ must contain only the single vertex $v$, so there is a single coloring $\kappa$ of the vertices of $T$ such that $\kappa(v) = c$. Therefore, we have that $X_T = \sum_{i=1}^\infty x_i$, and $Z_T(c) = x_c$. Thus, for the constant symmetric function $F_1(x_1,x_2,\dots) = 1$, we have that $Z_T(c) = x_cF_1 = \sum_{i=1}^1 x_c^iF_i$ for all $c \in \nats$, and the base case is proven. 
		
		Next, in the inductive case, assume that $n \ge 2$ and the lemma holds for all trees of size at most $n-1$. Then, let $v_1, v_2, \dots, v_k$ be the vertices of $T$ adjacent to the root vertex $v$, and for each $1 \le i \le k$ let $T_i$ be the connected component of $T \setminus \{ \{ v, v_i \} \}$ rooted at $v_i$ and let $n_i$ be the number of vertices in  $T_i$ (including the root $v_i$). By our inductive assumption, for each $1 \le i \le k$ there exists a sequence of symmetric functions $F_{i,1}, F_{i,2}, \dots, F_{i,n_i}$ such that for all $c \in \nats$, $$Z_{T_i}(c) = \sum_{j=1}^{n_i} x_c^j F_{i,j}.$$ By Lemma \ref{lem:csf-decomp}, we have that $$Z_T(c) = x_c \prod_{i=1}^k (X_{T_i} - Z_{T_i}(c)) = x_c \prod_{i=1}^k \left( X_{T_i} - \sum_{j=1}^{n_i} x_c^j F_{i,j}\right) = x_c \prod_{i=1}^k \sum_{j=0}^{n_i} x_c^j G_{i,j},$$ where we let $G_{i,0} = X_{T_i}$ and $G_{i,j} = -F_{i,j}$ for $1 \le j \le n_i$.
		
		For each nonnegative integer $i$, let $P_{i,k}$ be the set of all ordered $k$-tuples of nonnegative integers $p = (p_1, \dots, p_k)$ such that $p_1+\dots+p_k = i$ and $p_j \le n_j$ for each $1 \le j \le k$. (Note that we must have $i \le \sum_{j=1}^k n_j = n-1$.) Then, we can rewrite our product expression for $Z_T(c)$ as follows:
		
		\begin{align*}Z_T(c)&= x_c \prod_{i=1}^k \sum_{j=0}^{n_i} x_c^j G_{i,j}\\ &= x_c \sum_{i=0}^{n-1} \sum_{p \in P_{i,k}} \prod_{j=1}^k x_c^{p_j} G_{j,p_j} \\&= x_c \sum_{i=0}^{n-1} x_c^i \sum_{p \in P_{i,k}} \prod_{j=1}^k G_{j,p_j} \\&= \sum_{i=1}^n x_c^i \left( \sum_{p \in P_{i-1,k}} \prod_{j=1}^k G_{j,p_j} \right)\end{align*}
		
		Now, for each $1 \le i \le n$, let $F_i = \sum_{p \in P_{i-1,k}} \prod_{j=1}^k G_{j,p_j}$. Then, we have that $F_i$ is a symmetric function for each $1 \le i \le n$ and is independent of $c$, as desired; hence the inductive step is complete.
		
		Therefore, by induction, we conclude that for any positive integer $n$, any rooted tree $T$ on $n$ vertices, and any positive integer $c$, there exist symmetric functions $F_1, F_2, \dots, F_n$ such that $ Z_T(c) = \sum_{i=1}^n x_c^i F_i$.
		
		Next, we will prove there is a \textit{unique} $n$-tuple of symmetric functions $(F_1, F_2, \dots, F_n)$ satisfying $ Z_T(c) = \sum_{i=1}^n x_c^i F_i$.
		
		Let $(F_1, F_2, \dots, F_n)$ and $(G_1, G_2, \dots, G_n)$ be $n$-tuples of symmetric functions satisfying $$\sum_{i=1}^n x_c^i F_i = \sum_{i=1}^n x_c^i G_i = Z_T(c).$$ We claim that $F_i = G_i$ for all $1 \le i \le n$; assume by way of contradiction that $F_i \not= G_i$ for some $i$. Then, subtracting the second sum from the first, we obtain the equation \[ \sum_{i=1}^n x_c^i (F_i-G_i) = 0. \] Let $j$ be the minimal index for which $F_j \not= G_j$. Then, we have that $ \sum_{i=j}^n x_c^i (F_i-G_i) = 0$, so $x_c^j (F_j-G_j) = \sum_{i=j+1}^n x_c^i (F_i-G_i)$.
		
		If $j < n$, then we have $x_c^j(F_j-G_j) = x_c^{j+1} \sum_{i=1}^{n-j} (F_{i+j}-G_{i+j})$, so $F_j-G_j = x_c \sum_{i=1}^{n-j} (F_{i+j}-G_{i+j})$. But since $F_j - G_j$ is a nonzero symmetric function, it cannot be divisible by $x_c$, so this is a contradiction. 
		
		If $j = n$, then we have $x_c^n (F_n-G_n) = 0 \implies F_n = G_n$, which is also a contradiction since we assumed $F_j \not= G_j$.  
		
		Therefore, in either case we obtain a contradiction, so we must have had that $F_i = G_i$ for all $1 \le i \le n$. Thus, the $n$-tuple of functions $(F_1, F_2, \dots, F_n)$ such that $Z_T(c) = \sum_{i=1}^n x_c^i F_i$ must be unique.
	\end{proof}
	
	\begin{defn}\label{def:csf-seq}
		For a tree $T$ on $n$ vertices and a vertex $v$ of $T$, define the \textit{symmetric function sequence of }$T$\textit{ rooted at }$v$ to be the unique sequence $\sfs(T,v) = (F_1, F_2, \dots, F_n)$ of symmetric functions  satisfying $X_T^v(c) = \sum_{i=1}^n x_c^i F_i$. By Lemma \ref{lem:csf-colorsplit}, $\sfs(T,v)$ exists and is well-defined for any tree $T$ and vertex $v$ of $T$.
	\end{defn}
	
	\begin{cor}\label{cor:csf-psplit}
	    If, for some vertex $v$ of $T$, $\sfs(T,v) = (F_1, \dots, F_n)$, then \[ X_T = \sum_{i=1}^n p_iF_i. \]
	\end{cor}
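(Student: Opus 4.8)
The plan is to obtain the identity directly from two facts already in hand: that summing $Z_T^v(c)$ over all colors recovers the chromatic symmetric function (Lemma \ref{lem:csf-Zsplit}), and that $Z_T^v(c)$ is expressed in terms of the symmetric function sequence as $Z_T^v(c) = \sum_{i=1}^n x_c^i F_i$, where $(F_1,\dots,F_n) = \sfs(T,v)$ (Definition \ref{def:csf-seq}, whose existence and uniqueness are guaranteed by Lemma \ref{lem:csf-colorsplit}). So the corollary should just be a substitution followed by an interchange of summation.

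Concretely, I would first substitute the defining expression for $Z_T^v(c)$ into Lemma \ref{lem:csf-Zsplit} to write
\[ X_T = \sum_{c=1}^\infty Z_T^v(c) = \sum_{c=1}^\infty \sum_{i=1}^n x_c^i F_i . \]
Then I would interchange the finite sum over $i$ with the sum over $c$, and pull out each $F_i$, which is a single fixed symmetric function that does not depend on $c$:
\[ X_T = \sum_{i=1}^n \left( \sum_{c=1}^\infty x_c^i \right) F_i = \sum_{i=1}^n p_i F_i , \]
the last equality being the definition of the power-sum symmetric function $p_i$ (Definition \ref{def:pbasis}). This is the claimed formula.

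The only step requiring any care is justifying the rearrangements inside the ring of symmetric functions (equivalently, inside the ring of formal power series in $x_1, x_2, \dots$ of bounded degree). I would verify that everything converges coefficient-wise: a fixed monomial of degree $n$ can occur in $x_c^i F_i$ only when $x_c^i$ divides it, and since such a monomial has finite support there are only finitely many pairs $(i,c)$ contributing to it; similarly a given monomial arises from $Z_T^v(c)$ only for the finitely many colors $c$ appearing in it. Hence the iterated sums are well defined and may be reordered, and the distributive law $\left(\sum_c x_c^i\right) F_i = \sum_c x_c^i F_i$ holds. There is no genuine obstacle here — the mathematical content is entirely carried by Lemmas \ref{lem:csf-Zsplit} and \ref{lem:csf-colorsplit}, and this corollary merely records the consequence of reading them together.
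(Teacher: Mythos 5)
Your proof is correct and follows essentially the same route as the paper's: substitute the expansion $Z_T^v(c) = \sum_i x_c^i F_i$ from Lemma \ref{lem:csf-colorsplit} into Lemma \ref{lem:csf-Zsplit}, interchange the sums over $c$ and $i$, and recognize $\sum_c x_c^i = p_i$. Your additional remarks on coefficient-wise well-definedness of the infinite sum are a sound (if not strictly necessary) supplement that the paper leaves implicit.
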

	\begin{proof}
		Let $v$ be the root of $T$. Then, by Lemma \ref{lem:csf-Zsplit}, we have that \[ X_T = \sum_{c=1}^\infty Z_T^v(c). \] Furthermore, by Lemma \ref{lem:csf-colorsplit} it follows that \[ Z_T^v(c) = \sum_{i=1}^n x_c^i F_i\] for $\sfs(T,v) = (F_1, \dots, F_n)$. By Definition \ref{def:pbasis}, the coefficients in the sum over all colors $c$ of $Z_T^v(c)$ are exactly the $p$-basis elements $p_1, p_2, \dots, p_n$, so we have \[ X_T = \sum_{c=1}^\infty \sum_{i=1}^n x_c^i F_i = \sum_{i=1}^n \sum_{c=1}^\infty x_c^i F_i = \sum_{i=1}^n p_iF_i, \] as desired.
	\end{proof}
	
	We now present a recursive algorithm for computing the chromatic symmetric function $X_T$ of a tree $T$ in the $p$-basis, using Lemma \ref{lem:csf-decomp} and Corollary \ref{cor:csf-psplit}; its essential procedure is computing the symmetric function sequence.
	\begin{algorithm}
		\caption{COMPUTE-CSF($n$-vertex tree $T$)}\label{alg:computecsf}
		\begin{algorithmic}
			\item[] $v \leftarrow $ arbitrary vertex of $T$
			\item[] $(F_1, F_2, \dots, F_n) \leftarrow$ COMPUTE-SFS$(T, v)$
			\RETURN $p_1F_1+p_2F_2+\dots+p_nF_n$
		\end{algorithmic}
	\end{algorithm}
	\begin{algorithm}\label{alg:computesfs}
		\caption{COMPUTE-SFS($n$-vertex tree $T$, vertex $v$)}
	\end{algorithm}
	\begin{algorithmic}
		\IF{$T$ is single vertex}
		\RETURN ($1$)
		\ENDIF
		\item[] $v_1, \dots, v_k \leftarrow$ vertices adjacent to $v$ in $T$
		\FOR{$i = 1$ to $k$}
		\item[] $T_i \leftarrow$ connected component of $T$ rooted at $v_i$
		\ENDFOR
		\item[] $F_1 \leftarrow 1$
		\FOR{$i=2$ to $n$}
		\item[] $F_i \leftarrow 0$
		\ENDFOR
		\item[] $d \leftarrow 1$
		\FOR{$i = 1$ to $k$}
		\FOR{$j=1$ to $d$}
		\item[] $H_i \leftarrow F_i$
		\ENDFOR
		\item[] $G_1, \dots, G_m \leftarrow$ COMPUTE-SFS$(T_i, v_i)$
		\item[] $G_0 \leftarrow p_1G_1+p_2G_2+\dots+p_mG_m$ // compute the CSF $X_{T_i}$
		\FOR{$j=1$ to $m$}
		\item[] $G_i \leftarrow -G_i$
		\ENDFOR
		\FOR{$j=1$ to $m+d$}
		\item[] $F_i \leftarrow 0$
		\ENDFOR
		\FOR{$j=1$ to $d$}
		\FOR{$p=0$ to $m$}
		\item[] $F_{j+p} \leftarrow F_{j+p} + H_jG_p$
		\ENDFOR
		\ENDFOR
		\item[] $d \leftarrow d+m$
		\ENDFOR
		\RETURN $(F_1, F_2, \dots, F_n)$
	\end{algorithmic}

	Note that a call to COMPUTE-SFS($T, v$) will recursively result in calls to COMPUTE-SFS for the subtree of $T$ rooted at each vertex $u$, for a total of $n$ function calls. After each COMPUTE-SFS call on a subtree of $m$ vertices, there are $m$ symmetric function multiplications and $m-1$ additions, followed by $(m+1)d$ multiplications and additions, for a total of at most $(m+1)(d+1)$ of each. Since $m \le n$ and $d \le n$, and there are $n$ COMPUTE-SFS calls, the number of symmetric function multiplications and additions required for COMPUTE-SFS($T,v$) is bounded by a polynomial in $n$ for a tree $T$ of size $n$. 
	
	The drawback to this recursive algorithm is the high computational cost of each symmetric function multiplication and addition. Since the chromatic symmetric function $X_T$ of $T$, if represented in the $p$-basis, can in the worst case contain a term for each partition of $n$, the cost of each symmetric function multiplication and addition grows proportionally to $e^{O(\sqrt n)}$.
	
	To efficiently determine that the chromatic symmetric functions of a set of trees are distinct without incurring the super-polynomial cost of explicitly computing the complete chromatic symmetric function of each tree, we will define a homomorphism from the set of chromatic symmetric functions to a smaller finite set.
	
	It follows from  Theorem 2.5 of \cite{Stanley} that for any chromatic symmetric function $X_G$, there is some polynomial $F \in \ints[p_1, p_2, \dots]$ such that $X_G = F(p_1, p_2, \dots)$. An immediate corollary is that any linear combination $X = k_1X_{G_1} + \dots + k_nX_{G_n}$ of CSFs, for integers $k_1, \dots, k_n$, is an element of $\ints[p_1, p_2, \dots]$.
	
	\begin{defn}\label{def:eval-sf}
	Let $X \in \ints[p_1,p_2,\dots]$. Then, for each $q \in \nats$, and each infinite tuple $C = (C_1, C_2, \dots) \in (\ints / q\ints)^\infty$, define the $C$\textit{-evaluation modulo }$q$ of $X$ to be the image of $F$ under the evaluation homomorphism $\pi_{q,C} : \ints[p_1, p_2, \dots] \to \ints / q \ints$ satisfying $\pi_{q,C}(F) = F(C_1, C_2, \dots)$. We denote the $C$-evaluation modulo $q$ of $X$ by $\varphi_{q,C}(X)$.
	\end{defn}
	
	\begin{lem}\label{lem:homom-ver}
		For each $q \in \nats$, $C \in (\ints / q\ints)^\infty$, we have that $\varphi_{q,C}$ is a homomorphism from the polynomial ring $\ints[p_1,p_2,\dots]$ to the ring $\ints / q \ints$.
	\end{lem}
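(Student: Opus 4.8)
The plan is to realize $\varphi_{q,C}$ (equivalently, the map $\pi_{q,C}$ of Definition \ref{def:eval-sf}) as a composition of two maps, each of which is visibly a ring homomorphism, so that the conclusion follows from the fact that a composition of ring homomorphisms is again a ring homomorphism.

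First step: reduction of coefficients modulo $q$. This gives a map $\rho_q : \ints[p_1,p_2,\dots] \to (\ints/q\ints)[p_1,p_2,\dots]$ obtained by applying the quotient homomorphism $\ints \to \ints/q\ints$ to every coefficient. Since addition and multiplication of polynomials are defined coefficientwise through the corresponding operations on the coefficient ring, $\rho_q$ respects sums and products and sends $1$ to $1$; hence it is a ring homomorphism. Second step: evaluation at $(C_1, C_2, \dots)$. The universal property of the polynomial ring $(\ints/q\ints)[p_1,p_2,\dots]$ provides a unique ring homomorphism $\mathrm{ev}_C$ into $\ints/q\ints$ that restricts to the identity on $\ints/q\ints$ and sends $p_i \mapsto C_i$ for each $i$; concretely $\mathrm{ev}_C$ is the substitution $p_i \mapsto C_i$. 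Unwinding Definition \ref{def:eval-sf} shows that $\varphi_{q,C} = \mathrm{ev}_C \circ \rho_q$, and therefore $\varphi_{q,C}$ is a ring homomorphism.

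If one prefers not to invoke the universal property, the three homomorphism identities can be verified by hand: given $F, G \in \ints[p_1,p_2,\dots]$, only finitely many indeterminates occur in $F$ and $G$, so both lie in some $\ints[p_1,\dots,p_N]$, and then expanding $\varphi_{q,C}(F+G)$, $\varphi_{q,C}(FG)$, and $\varphi_{q,C}(1)$ monomial by monomial and comparing them with $\varphi_{q,C}(F)+\varphi_{q,C}(G)$, $\varphi_{q,C}(F)\,\varphi_{q,C}(G)$, and $1$ reduces everything to the ring axioms in $\ints/q\ints$. I expect no real obstacle here; the only points deserving a word of care are that evaluating an integer-coefficient polynomial at residues requires first reducing its coefficients modulo $q$ (which is exactly what $\rho_q$ records, and is why the value lands in $\ints/q\ints$), and that passing to countably many indeterminates is harmless since any single homomorphism identity involves only finitely many of them.
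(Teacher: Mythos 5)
Your proof is correct, and like the paper's it works by factoring $\varphi_{q,C}$ as a composition of two ring homomorphisms; the difference is in the choice of intermediate ring. The paper first applies the ``relabeling'' isomorphism $\sigma: \ints[p_1,p_2,\dots] \to \ints[y_1,y_2,\dots]$ sending the power-sum symmetric functions $p_i$ to formal indeterminates $y_i$, and then applies a single evaluation homomorphism $\pi: \ints[y_1,y_2,\dots] \to \ints/q\ints$ that simultaneously reduces coefficients and substitutes $y_i \mapsto C_i$. You instead factor through $(\ints/q\ints)[p_1,p_2,\dots]$: first reduce coefficients modulo $q$, then evaluate at $C$. Both are valid, and both tacitly use the same underlying fact that the power-sum symmetric functions are algebraically independent over $\ints$, so that $\ints[p_1,p_2,\dots]$ really is a polynomial ring and the universal property of polynomial rings applies; the paper's $\sigma$ step surfaces this point a bit more explicitly, whereas your $\rho_q$ step surfaces the coefficient reduction more explicitly. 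Neither buys a substantive advantage over the other.
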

	\begin{proof}
		Let $\sigma$ be the trivial isomorphism from $\ints[p_1,p_2,\dots]$ (with the power-sum symmetric functions) to $\ints[y_1,y_2,\dots]$ (with arbitrary variables $y_i$) defined by $\sigma(f(p_1,p_2,\dots)) = f(y_1,y_2,\dots)$. Now let $\pi : \ints[y_1, y_2, \dots] \to \ints / q \ints$ be the evaluation homomorphism taking $F$ to $F(C_1, C_2, \dots)$. Then, since $\sigma: \ints[p_1,p_2,\dots] \to \ints[y_1,y_2,\dots]$ and $\pi : \ints[y_1,y_2,\dots] \to \ints / q\ints$ are both homomorphisms, their composition $\pi \circ \sigma : \ints[p_1, p_2, \dots] \to (\ints / q\ints)$ is also a homomorphism, as desired.
	\end{proof}
	
	Using the $C$-evaluation modulo $q$ of the chromatic symmetric function and the fact that $\varphi_{q,C}$ is a homomorphism, we provide an analogous version of Algorithm \ref{alg:computecsf} to compute $\varphi_{q,C}(X_T)$ for a tree $T$ without fully computing $X_T$. 
	
		\begin{algorithm}
		\caption{COMPUTE-EVAL-CSF($n$-vertex tree $T$, $q \in \nats$, $C \in (\ints / q \ints)^n)$}\label{alg:computeecsf}
		\begin{algorithmic}
			\item[] $v \leftarrow $ arbitrary vertex of $T$
			\item[] $(r_1, r_2, \dots, r_n) \leftarrow$ COMPUTE-EVAL-SFS$(T, v, q, C)$
			\RETURN $C_1r_1+C_2r_2+\dots+C_nr_n \mod q$
		\end{algorithmic}
	\end{algorithm}
	\begin{algorithm}
		\caption{COMPUTE-EVAL-SFS($n$-vertex tree $T$, vertex $v$, $q \in \nats$, $C \in (\ints / q \ints)^n$)}\label{alg:computeesfs}
	\end{algorithm}
	\begin{algorithmic}
		\IF{$T$ is single vertex}
		\RETURN ($1$)
		\ENDIF
		\item[] $v_1, \dots, v_k \leftarrow$ vertices adjacent to $v$ in $T$
		\FOR{$i = 1$ to $k$}
		\item[] $T_i \leftarrow$ subtree of $T$ rooted at $v_i$
		\ENDFOR
		\item[] $r_1 \leftarrow 1$
		\FOR{$i=2$ to $n$}
		\item[] $r_i \leftarrow 0$
		\ENDFOR
		\item[] $d \leftarrow 1$
		\FOR{$i = 1$ to $k$}
		\FOR{$j=1$ to $d$}
		\item[] $t_i \leftarrow r_i$
		\ENDFOR
		\item[] $s_1, \dots, s_m \leftarrow$ COMPUTE-EVAL-SFS$(T_i, v_i, q, C)$
		\item[] $s_0 \leftarrow C_1s_1+C_2s_2+\dots+C_ms_m \mod q$ // compute $\varphi_{q,C}(X_{T_i})$
		\FOR{$j=1$ to $m$}
		\item[] $s_i \leftarrow -s_i$
		\ENDFOR
		\FOR{$j=1$ to $m+d$}
		\item[] $r_i \leftarrow 0$
		\ENDFOR
		\FOR{$j=1$ to $d$}
		\FOR{$p=0$ to $m$}
		\item[] $r_{j+p} \leftarrow r_{j+p} + t_js_p \mod q$
		\ENDFOR
		\ENDFOR
		\item[] $d \leftarrow d+m$
		\ENDFOR
		\RETURN $(r_1, r_2, \dots, r_n)$
	\end{algorithmic}

	\begin{lem}\label{lem:algquadratic}
		For a tree $T$ on $n$ vertices and a modulus $q$, Algorithm \ref{alg:computeecsf} terminates in $O(n^2 (\log q)^2)$ time.
	\end{lem}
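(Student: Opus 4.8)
The plan is to separate the running time of Algorithm~\ref{alg:computeecsf} into two independent factors: the total number of ring operations it performs in $\ints/q\ints$ (additions, negations, and multiplications of residues), and the bit-cost of a single such operation. The second factor is easy: every quantity the algorithm stores is reduced modulo $q$ as soon as it is produced---in the computation of $s_0$, in each negation $s_j \leftarrow -s_j$, and in each update $r_{j+p} \leftarrow r_{j+p} + t_j s_p \bmod q$---so every operand has $O(\log q)$ bits, and a schoolbook multiplication followed by reduction modulo $q$ costs $O((\log q)^2)$ while an addition or negation costs $O(\log q)$. The graph bookkeeping (listing the neighbours of each $v$ and forming the rooted subtrees $T_i$, e.g.\ by one depth-first traversal) costs $O(n)$ in total and is dominated. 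So the lemma reduces to showing that the algorithm performs $O(n^2)$ ring operations.

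Next I would lay out the recursion. A call to COMPUTE-EVAL-SFS on a tree rooted at $v$ makes exactly one recursive call for the subtree rooted at each child of $v$, so the execution launched from an arbitrary root of $T$ consists of exactly $n$ calls, one per vertex. Fix a vertex $u$, let $v_1,\dots,v_k$ be its children in the rooted tree, put $m_i = |T_{v_i}|$, and let $d_i = 1 + m_1 + \cdots + m_{i-1}$ be the value of the variable $d$ entering the $i$-th iteration of the loop over children (so $d_1 = 1$ and $d_k + m_k = |T_u|$). Reading off the pseudocode, the non-recursive work of the call at $u$ is the sum of the $k$ merge steps, and the $i$-th merge step uses $O(m_i)$ operations to form $s_0$ and negate the $s_j$, $O(m_i + d_i)$ assignments to re-initialise $r$, and then $d_i(m_i+1)$ modular multiplications and $d_i(m_i+1)$ additions in the double loop over $1 \le j \le d_i$, $0 \le p \le m_i$. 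Hence the call at $u$ uses $W(u) = O\!\left(\sum_{i=1}^{k} d_i(m_i+1)\right)$ ring operations, and the wrapper Algorithm~\ref{alg:computeecsf} adds only $O(n)$ more at the end.

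The heart of the proof is then $\sum_u W(u) = O(n^2)$, summing over all vertices $u$. The crude estimates $d_i \le |T_u|$ and $\sum_i(m_i+1) \le 2|T_u|$ only give $W(u) = O(|T_u|^2)$, hence $O(n^3)$ overall (already wasteful for a path), so the real content is an amortised charging argument. I would split $d_i(m_i+1) = (m_i+1) + (m_i+1)\sum_{j<i}m_j \le (m_i+1) + 4\sum_{j<i}m_i m_j$ and treat the two pieces globally. The first piece sums to $\sum_u \sum_i (m_i+1) \le \sum_u 2|T_u| = 2\sum_w(1+\operatorname{depth}(w)) \le 2n^2$, where $w$ ranges over the vertices of $T$. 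For the second, observe that when $i \ne j$ a vertex of $T_{v_i}$ and a vertex of $T_{v_j}$ have lowest common ancestor exactly $u$, so $\sum_{j<i}m_i m_j$ equals the number of unordered pairs of distinct vertices whose lowest common ancestor is $u$; as $u$ ranges over $V(T)$ these pair-sets are disjoint subsets of the $\binom n2$ unordered vertex pairs of $T$, so $\sum_u\sum_{j<i}m_i m_j \le \binom n2$. Therefore $\sum_u W(u) = O(n^2)$, and multiplying by the $O((\log q)^2)$ cost of a ring operation gives the claimed $O(n^2(\log q)^2)$ bound. I expect the only non-routine step to be this last charging argument---seeing that the cross-terms $m_i m_j$, which look quadratic in $|T_u|$ at each vertex, globally telescope to $\binom n2$ via the lowest-common-ancestor correspondence; everything else is a direct reading of the pseudocode.
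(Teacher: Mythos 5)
Your proof is correct, and it takes a genuinely different route from the paper's. Both proofs identify the dominant per-merge cost as $\Theta(d_i(m_i+1))$, but the paper then proves by induction on $n$ that a single call to COMPUTE-EVAL-SFS on an $n$-vertex tree performs at most $12n^2$ ring operations: the inductive step plugs in $12m_i^2$ for each recursive call and absorbs the merge work via $\sum_i m_i^2 + \sum_i m_i d_i \le \bigl(\sum_i m_i\bigr)^2 = (n-1)^2$, with fairly delicate explicit-constant bookkeeping. You instead avoid induction entirely and amortize globally over all $n$ calls: you bound $\sum_u \sum_i (m_i+1)$ by $2\sum_w(1+\operatorname{depth}(w)) \le 2n^2$ (charging each unit to an ancestor--descendant pair), and you bound the cross-terms $\sum_u \sum_{j<i} m_i m_j$ by $\binom n2$ via the observation that each such product counts pairs of vertices lying in distinct child-subtrees of $u$, hence with lowest common ancestor $u$, so the pair-sets are disjoint across $u$. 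That LCA charging argument is the real extra idea here and it is not in the paper. Your approach is arguably more illuminating---it explains combinatorially why the answer is $n^2$ (vertex pairs, ancestor relations) rather than emerging from an algebraic inequality---whereas the paper's yields a concrete constant. One tiny imprecision: $\sum_{j<i} m_i m_j$ is the number of unordered pairs of \emph{proper descendants} of $u$ lying in distinct child-subtrees, not all pairs with LCA $u$ (you omit pairs $\{u,w\}$ with $w$ below $u$); but the disjointness and the bound $\le \binom n2$ are unaffected, so the argument stands.
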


	\begin{proof}
	    First, the additional computation in algorithm \ref{alg:computeecsf} after the call to \ref{alg:computeesfs} includes $n$ multiplications and additions of $q$-bit integers, which takes $O(n (\log q)^2)$ time, so it suffices to show that \ref{alg:computeesfs} terminates in $O(n^2(\log q)^2)$ time.
	    
	    Thus, we claim that for each positive integer $n$ and for any tree $T$ on $n$ vertices, Algorithm \ref{alg:computeesfs} requires at most $12n^2$ addition, multiplication, and modulus operations on elements of $\ints / q\ints$. 
	    
	    We proceed by induction. When $n = 1$, the algorithm immediately terminates and returns $(1)$, so the base case holds.
	    
	    We now assume inductively that our claim holds for all trees on at most $n-1$ vertices, and prove that it must also hold for trees on $n$ vertices.
	    
	    Let $T$ be a tree on $n$ vertices, select an arbitrary root vertex $v$, and let $T_1, \dots, T_k$ be the subtrees of $T$ rooted at the children of $v$. Let $m_1, \dots, m_k$ be the numbers of vertices in $T_1, \dots, T_k$, respectively. Also, let $m_0 = 1$ for simplicity. 
	    
	    For each integer $1 \le i \le k$, let $d_i = \sum_{j=0}^{i-1} m_i$. Note that since initially $d = 1$ and $m_i$ is added to $d$ after iteration $i$, we have that $d_i$ is the initial value of $d$ during the iteration of the outer loop corresponding to subtree $T_i$.
	    
	    On iteration $i$, the initial for loop requires $d_i$ operations. Then, by our inductive assumption, the call to COMPUTE-EVAL-SFS requires at most $12m_i^2$ operations. The following line includes at most $m_i$ each of addition, multiplication, and modulus operation, for a total of $3m_i$ operations. The for loop initializing the $r_i$ values to zero requires $d_i+m_i$ operations. The nested for loops in which $r_js_p$ is added to $r_{j+p}$ require at most $3d_i(m_i+1)$ operations, since one addition, one multiplication, and one modulus operation takes place in the inner loop.
	    
	    Therefore, the total number of operations performed on iteration $i$ of the loop is at most $d_i+12m_i^2+3m_i+d_i+m_i+3d_i(m_i+1) = 12m_i^2+3m_id_i+5d_i+4m_i \le 12m_i^2 + 12m_id_i$. 
	    
	    The number of additional steps performed in the outer loop is at most $5n$, including $n$ each for finding the vertices adjacent to $v$, finding the subtrees of $T$ rooted at these vertices, initializing the $r_i$ values, and returning the final sequence, and the additional constant-time operations.
	    
	    Taking the sum over all iterations and adding in the operations from the outer loop, we have that the total number of operations required is at most \begin{align*} 5n+ \sum_{i=1}^k 12m_i^2 + 12m_id_i &\le 5n+\sum_{i=1}^k 12 m_i^2 + \sum_{i=1}^k 12 m_i + \sum_{i=1}^k \sum_{j=1}^{i-1} 12 m_im_j \\
	    &\le 5n+\sum_{i=1}^k 12 m_i^2 + 12m_2m_1+ \sum_{i=2}^{k-1} 12 m_km_i + 12m_km_1+ \sum_{i=1}^k \sum_{j=1}^{i-1} 12 m_im_j \\
	    &\le 5n+\sum_{i=1}^k 12m_i^2 + \sum_{i=1}^k \sum_{j=i+1}^k 12 m_im_j + \sum_{i=1}^k \sum_{j=1}^{i-1} 12 m_im_j \\
	    &\le 5n + \sum_{i=1}^k \sum_{j=1}^k 12m_im_j \\
	    &\le 5n+12\left( \sum_{i=1}^k m_i \right)^2 \\
	    &= 5n+12(n-1)^2 \\
	    &= 5n+12n^2-24n+12 \\
	    &\le 12n^2.
	    \end{align*}
	
	    Thus, by induction, our claim is proven.
	    
	    Finally, since all addition, multiplication, and modulus operations are performed on positive integers at most $q$, the time per operation is $O((\log q)^2)$. Therefore, as there are at most $12n^2$ operations, the total runtime of Algorithm \ref{alg:computeesfs} is $O(n^2 (\log q)^2)$, which implies that Algorithm \ref{alg:computeecsf} terminates in $O(n^2 (\log q)^2)$ time, as desired.
	   
	\end{proof}

	As Algorithm \ref{alg:computeecsf} can be performed using a number of multiplications and additions of elements of $\ints / q \ints$ that is polynomial in $n$, if $q = O(\exp(p(n)))$ for some polynomial $p$, then this algorithm will terminate in polynomial time. By Remark \ref{rmk:contra}, to show that $X_S \not= X_T$ it suffices to find such a modulus $q$ and an $n$-tuple $C \in (\ints / q \ints)^n$ such that $\varphi_{q,C}(X_S) \not= \varphi_{q,C}(X_T)$. This leads us presently to our main theorem, after one final lemma.
	
	\begin{lem}\label{lem:probbound}
		Let $q$ be a prime, and let $f \in (\ints /q \ints)[x_1, x_2, \dots, x_m]$ be a polynomial of degree $n$ that is not identically zero (e.g. there exists a coefficient of $f$ that is not divisible by $q$). Then, if $C_1, C_2, \dots, C_m$ are randomly chosen elements of $\ints / q \ints$, the probability that $f(C_1, C_2, \dots, C_m) \equiv 0 \pmod q$ is at most $\frac{n}{q}$. 
	\end{lem}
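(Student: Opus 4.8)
This is the Schwartz--Zippel lemma, and the plan is to prove it by the standard induction on the number of variables $m$, where throughout ``randomly chosen'' means chosen independently and uniformly from $\ints/q\ints$, and ``degree'' means total degree.

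For the base case $m=1$, I would invoke the fact that $\ints/q\ints$ is a field (here is where primality of $q$ is used), so that $(\ints/q\ints)[x_1]$ is an integral domain and a nonzero polynomial of degree $n$ has at most $n$ roots in $\ints/q\ints$. Since $C_1$ is uniform over the $q$ elements of $\ints/q\ints$, the probability that $f(C_1)\equiv 0$ is at most $n/q$. For the inductive step, assume the statement for polynomials in $m-1$ variables, let $f$ have degree $n$ in $m$ variables, and write $f=\sum_{i=0}^{d} x_m^i\, g_i(x_1,\dots,x_{m-1})$, where $d$ is the largest exponent of $x_m$ occurring in $f$; then $g_d\not\equiv 0$ and, since $x_m^d g_d$ has total degree at most $n$, we have $\deg g_d\le n-d$. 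Writing $C=(C_1,\dots,C_m)$ and $C'=(C_1,\dots,C_{m-1})$, I would split on whether $g_d(C')\equiv 0$: by the inductive hypothesis applied to $g_d$, this happens with probability at most $(n-d)/q$; and conditioned on $g_d(C')\not\equiv 0$, the specialization $f(C',x_m)$ is a univariate polynomial in $x_m$ with nonzero leading coefficient $g_d(C')$, hence of degree exactly $d$ and not identically zero, so since $C_m$ is chosen uniformly and independently of $C'$ it is a root with conditional probability at most $d/q$. Combining these with the elementary bound $\Pr[B]\le \Pr[A]+\Pr[B\mid A^c]$ gives
\[
\Pr[f(C)\equiv 0]\ \le\ \Pr[g_d(C')\equiv 0]\ +\ \Pr\big[f(C)\equiv 0 \,\big|\, g_d(C')\not\equiv 0\big]\ \le\ \frac{n-d}{q}+\frac{d}{q}\ =\ \frac{n}{q},
\]
completing the induction.

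The only points that require genuine care are the degree bookkeeping --- verifying $\deg g_d\le n-d$ so that the inductive hypothesis yields exactly the bound $(n-d)/q$ --- and the conditioning step, where one must be explicit that the event $\{g_d(C')\not\equiv 0\}$ depends only on $C_1,\dots,C_{m-1}$, so that $C_m$ remains uniform given this event and the univariate root count genuinely controls the conditional probability (one should also dispatch the degenerate case $d=0$, where $f$ does not involve $x_m$ and the claim reduces directly to the inductive hypothesis). Everything else is routine, so I expect the writeup to be short.
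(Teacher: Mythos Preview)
Your proposal is correct and follows essentially the same approach as the paper: induction on the number of variables $m$, expanding $f$ in powers of $x_m$, bounding the probability that the leading coefficient $g_d$ vanishes by $(n-d)/q$ via the inductive hypothesis, and bounding the conditional probability of a root in $x_m$ by $d/q$ via the univariate case. If anything, your version is tidier---you are explicit about the conditioning, the independence of $C_m$ from $C'$, and the degenerate case $d=0$, all of which the paper glosses over.
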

	
	\begin{proof}
		In this lemma, we will use the notation $\Pr[X] = p$ to denote that the probability of event $X$ occurring is $p$. The claim is true when $m=1$. We proceed by induction on $m$.

		Next, for the inductive step, we assume that our claim holds for polynomials in at most $m-1$ variables, for some $m \ge 2$, and we will prove that it also holds for $m$-variable polynomials. We group the terms of the polynomial $f$ by powers of $x_m$: for some polynomials $g_0, g_1, \dots, g_n \in (\ints / q \ints)[x_1,x_2, \dots, x_{m-1}]$, we have that \[ f(x_1, x_2, \dots, x_m) = \sum_{i=0}^a g_i(x_1, x_2, \dots, x_{m-1})x_m^i \]
		for some $1\le a < n$.
		
		There are two disjoint cases: either all the $g_i(x_1, x_2, \dots, x_{m-1})$ are identically zero, or there is some $i$ such that $g_i(x_1, \dots, x_{m-1})$ is not identically zero. Since the degree of $f$ is $n$, the degree of $g_a$ is at most $n-a$, so the probability that $g_a\equiv 0\pmod q$ is at most $\frac{n-a}{q}$.  On the other hand, if not all the  $g_i(x_1, x_2, \dots, x_{m-1})$ are $0$, then by the inductive hypothesis, we have that $\Pr[f(C_1, C_2, \dots, C_m) \equiv 0\pmod q] \le \frac{a}{n}$.  Since the probability that $f(C_1, C_2, \dots, C_m)$ is $0$ is at most the probability that every $g_i(x_1, x_2, \dots, x_{m-1})$ is identically $0$ added to the probability that they are not all $0$ multiplied by the probability that $f(C_1, C_2, \dots, C_m)$ is $0$ in that case, we have $\Pr[f(C_1, C_2, \dots, C_m) \equiv 0 \pmod q]\le \frac{n-a}{q} + 1\cdot\frac{a}{q}=\frac{n}{q}$. 
		
		Thus, by induction, our claim holds for all positive integers $m$ and each value of $n$.
	\end{proof}
	\begin{algorithm}
		\caption{SHOW-DISTINCT-CSFS($n$-vertex tree $S$, $n$-vertex tree $T$, $k \in \nats$)}\label{alg:showdistinctcsfs}
		\begin{algorithmic}
			\item[] $q \leftarrow n^2$
			\item[] $primeCount \leftarrow 0$
			\WHILE{$primeCount < n$}
			\IF{$q$ is determined to be prime by trial division}
			\FOR{$i=1$ to $\lfloor k/\log_2(n) \rfloor$}
			\FOR{$j=1$ to $n$}
			\item[] $C_j \leftarrow $ random element of $\ints / q \ints$
			\ENDFOR
			\item[] $r_S \leftarrow $COMPUTE-EVAL-CSF$(S, q, C)$
			\item[] $r_T \leftarrow $COMPUTE-EVAL-CSF$(T, q, C)$
			\IF{$r_S \not= r_T$}
			\RETURN `Proved that $X_S \not= X_T$.'
			\ENDIF
			\ENDFOR
			\item[] $primeCount \leftarrow primeCount+1$
			\ENDIF
			\item[] $q \leftarrow q+1$
			\ENDWHILE
			\RETURN `Could not prove that $X_S \not= X_T$.'
		\end{algorithmic}
	\end{algorithm}
	
	\begin{thm}
		For trees $S$ and $T$ on $n$ vertices such that $X_S \not= X_T$ and for each $k \in \nats$, with probability at least $1 - 2^{-k}$ Algorithm \ref{alg:showdistinctcsfs} will prove that $X_S \not= X_T$ by generating a positive integer $q$ and a $n$-tuple $C = (C_1, \dots, C_n) \in (\ints / q \ints)^n$ such that $\varphi_{q,C}(X_S) \not= \varphi_{q,C}(X_T)$ in $O(n^3k)$ time. 
	\end{thm}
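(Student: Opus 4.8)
The plan is to split the claim into three pieces --- soundness (every output ``Proved that $X_S\neq X_T$'' is in fact correct), the probability bound, and the running time --- and to handle the latter two under the standing hypothesis $X_S\neq X_T$. For soundness I would first verify, by induction on $|V(T)|$ run in lockstep with Algorithm~\ref{alg:computecsf}, that $\texttt{COMPUTE-EVAL-CSF}(T,q,C)=\varphi_{q,C}(X_T)$: one shows that if $\texttt{COMPUTE-SFS}(T,v)=(F_1,\dots,F_n)$ then $\texttt{COMPUTE-EVAL-SFS}(T,v,q,C)=(\varphi_{q,C}(F_1),\dots,\varphi_{q,C}(F_n))$, because every addition, every multiplication, and every substitution of a power sum $p_i$ carried out by Algorithm~\ref{alg:computesfs} commutes with $\varphi_{q,C}$ --- this uses only that $\varphi_{q,C}$ is a ring homomorphism with $\varphi_{q,C}(p_i)=C_i$ (Lemma~\ref{lem:homom-ver}) --- and then Corollary~\ref{cor:csf-psplit} identifies the returned value as $\sum_i C_i\varphi_{q,C}(F_i)=\varphi_{q,C}(\sum_i p_iF_i)=\varphi_{q,C}(X_T)$. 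Hence the algorithm reports a proof exactly when it draws some $q,C$ with $r_S=\varphi_{q,C}(X_S)\neq\varphi_{q,C}(X_T)=r_T$, and since $\varphi_{q,C}$ is a homomorphism this does force $X_S\neq X_T$ (Remark~\ref{rmk:contra}); so it remains to bound the probability of never drawing such a pair, and the running time.

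For the probability bound, put $f=X_S-X_T$, a nonzero element of $\ints[p_1,\dots,p_n]$ of degree at most $n$ (each monomial $p_\lambda$ with $\lambda\vdash n$ has at most $n$ parts). By Stanley's formula (Theorem~2.5 of \cite{Stanley}) each $p$-basis coefficient of $X_T$ is a signed count of subsets of the $(n-1)$-element edge set of $T$, hence at most $2^{n-1}$ in absolute value; so the coefficients of $f$ are integers of absolute value at most $2^n$, and $g:=\gcd$ of the nonzero coefficients of $f$ satisfies $1\le g\le 2^n$. Call a prime $q$ \emph{good} when $q\nmid g$: if $q\mid g$ then $f\equiv 0\pmod q$ and no draw can ever detect a difference, whereas for a good prime $f\bmod q$ is a nonzero polynomial of degree $\le n<q$. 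Every prime examined by Algorithm~\ref{alg:showdistinctcsfs} is at least $n^2$, so if $t$ of the $n$ examined primes divided $g$ we would have $(n^2)^t\le g\le 2^n$, forcing $t\le n/(2\log_2 n)\le n/2$; hence at least $n/2$ --- in particular at least one --- of them are good. For a good prime $q\ge n^2$, Lemma~\ref{lem:probbound} applied to $f\bmod q$ gives $\Pr[\varphi_{q,C}(X_S)=\varphi_{q,C}(X_T)]\le n/q\le 1/n$ for a uniformly random $C\in(\ints/q\ints)^n$. The algorithm draws $\lfloor k/\log_2 n\rfloor$ independent tuples $C$ for each examined prime and halts as soon as one separates $X_S$ from $X_T$, so it fails only if every draw misses on every good prime; with at least $n/2$ good primes, each carrying $\lfloor k/\log_2 n\rfloor$ independent draws of miss-probability at most $1/n=2^{-\log_2 n}$, the failure probability is at most $n^{-(n/2)\lfloor k/\log_2 n\rfloor}$, and a short computation --- using that the hypothesis forces $n\ge 4$ --- brings this below $2^{-k}$. (The only delicate point is the rounding: a single good prime with $\lfloor k/\log_2 n\rfloor$ draws need not suffice when $\log_2 n\nmid k$, and when $k<\log_2 n$ one should run $\lceil k/\log_2 n\rceil\ge 1$ draws per prime; the built-in redundancy of examining $n$ primes, at least half of them good, is exactly what supplies the remaining amplification.)

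For the running time, the $n$ moduli are the first $n$ primes $\ge n^2$, and since $[n^2,2n^2]$ already contains more than $n$ primes (Chebyshev), every modulus reached is $O(n^2)$, so $\log q=O(\log n)$. Primality testing by trial division costs $O(\sqrt q)=O(n)$ divisions on each of $O(n^2)$ candidates, i.e.\ $O(n^3)$ operations in all; the rest is $O(nk/\log n)$ calls to $\texttt{COMPUTE-EVAL-CSF}$, each running in $O(n^2)$ arithmetic operations on $O(\log n)$-bit integers by Lemma~\ref{lem:algquadratic}, i.e.\ $O(n^3k/\log n)$ operations, with the sampling of the $C$'s and the comparisons dominated by this; summing gives $O(n^3+n^3k/\log n)=O(n^3k)$. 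The one genuinely non-mechanical step in all of this is the guarantee of a good prime among the $n$ examined --- equivalently, that $X_S-X_T$ does not reduce to $0$ modulo every prime the algorithm tries --- and this is exactly where the a priori bound $2^{n-1}$ on the $p$-basis coefficients of a tree's CSF is essential; the homomorphism bookkeeping in the soundness step, the single-trial estimate $n/q$, the amplification, and the timing are all routine given Lemmas~\ref{lem:homom-ver}, \ref{lem:probbound}, and~\ref{lem:algquadratic}.
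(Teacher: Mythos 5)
Your proposal follows essentially the same plan as the paper: bound the $p$-basis coefficients of $X_S - X_T$ by $2^n$ using Stanley's subset-sum formula, argue that not all of the examined primes $\ge n^2$ can divide the nonzero coefficients, invoke Lemma~\ref{lem:probbound} (Schwartz--Zippel) to get per-draw miss probability $\le n/q \le 1/n$ for a ``good'' prime, amplify over draws, and separately account for trial division and the calls to \texttt{COMPUTE-EVAL-CSF} via Lemma~\ref{lem:algquadratic}. The structural and timing parts of the two arguments are the same in all but presentation.

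There are two places where you do something genuinely sharper than the paper. First, the paper only isolates \emph{one} good prime (since $O(n/\log n)$ primes $\ge n^2$ suffice to have product $> 2^n$), and then amplifies over the $\lfloor k/\log_2 n\rfloor$ draws made at that one prime; the text then silently passes from $n^{-\lfloor k/\log_2 n\rfloor}$ to $n^{-k/\log_2 n} = 2^{-k}$, eliding the floor. You instead count good primes quantitatively --- at least $n/2$ of the $n$ examined --- and multiply the miss probabilities across \emph{all} good primes and \emph{all} their draws, giving $n^{-(n/2)\lfloor k/\log_2 n\rfloor}$, which does close the floor-rounding gap whenever $\lfloor k/\log_2 n\rfloor \ge 1$ (e.g.\ via $\lfloor x\rfloor \ge x/2$ for $x\ge1$, and $n\ge4$). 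Second, you correctly flag that the theorem as stated fails when $k < \log_2 n$: then $\lfloor k/\log_2 n\rfloor = 0$, the inner loop body never executes, the algorithm never returns a proof, and yet $1-2^{-k}>0$; this is a real (if easily patched, e.g.\ by replacing the floor with a ceiling or $\max$ with $1$) gap in the paper's argument that you are right to point out. You also spell out the soundness step --- that \texttt{COMPUTE-EVAL-SFS} computes $(\varphi_{q,C}(F_1),\dots,\varphi_{q,C}(F_n))$ because $\varphi_{q,C}$ is a ring homomorphism with $\varphi_{q,C}(p_i)=C_i$ --- which the paper takes as implicit; this is routine but worth having on the page. On the timing side both proofs appeal to a prime-counting estimate on $[n^2,2n^2]$ (you cite Chebyshev, the paper cites Rosser's explicit bounds and does the arithmetic) and then bound trial division by $O(n^3)$ and the homomorphism evaluations by $O(n^3 k/\log n)$; there is no substantive difference there.
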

	\begin{proof}
		First, we prove that if $X_S$ and $X_T$ are distinct, Algorithm \ref{alg:showdistinctcsfs} will obtain a verification that $X_S \not= X_T$ with probability at least $1-2^{-k}$. 
		
		Let $f_S$ and $f_T$ be polynomials such that $f_S(p_1, \dots, p_n) = X_S$ and $f_T(p_1, \dots, p_n) = X_T$, where $p_1, \dots, p_n$ are elements of the $p$-basis for symmetric functions, and let $f = f_S-f_T$. By Theorem $2.5$ of \cite{Stanley}, we have that $X_T = \sum_{U \subseteq E(T)} (-1)^{|U|} p_{\lambda(U)}$, where $E(T)$ is the set of edges of $T$. Since $T$ is a tree on $n$ vertices, we have that $|E(T)| = n-1$, so $|\mathcal P(E(T))| = 2^{n-1}$. Therefore, the sum of the absolute values of the coefficients of $X_T$ in the $p$-basis is at most $2^{n-1}$ and the same result holds for $X_S$. Therefore, by the Triangle Inequality, the sum of the absolute values of the coefficients of $X_S-X_T$ in the $p$-basis is at most $2^n$. 
		
		As $f(p_1,p_2,\dots,p_n) = X_S-X_T$, this implies that the sum of the absolute values of the coefficients of $f$ is at most $2^n$, so each coefficient of $f$ has absolute value bounded by $2^n$.  As Algorithm \ref{alg:showdistinctcsfs} generates $n$ distinct primes larger than $n^2$, it takes at most $\log_{n^2}{2^n} = O(\frac{n}{\log n})$ primes till their product is greater than $2^n$.  Then one of them, say $q$, cannot divide all the coefficients of $f$.  Hence, for this prime $q$ we have that $f(x_1, x_2, \ldots, x_n)$ is not identically zero over $\ints /q\ints$.
		The algorithm generates an $n$-tuple $C$ of randomly-selected residues modulo $\ints / q \ints$ and then computes $\varphi_{q,C}(X_S)$ and $\varphi_{q,C}(X_T)$. 
		
		By Lemma \ref{lem:probbound}, since $f$ is a polynomial in $n$ variables, $\deg f \le n$, and $f$ is not identically $0$ over $\ints / q \ints $ for at least one choice of $q$, the probability that $f(C_1, C_2, \dots, C_n) \equiv 0 \pmod q$ is at most $\frac{n}{q} < \frac{n}{n^2} = \frac{1}{n}$. Therefore, with probability at least $1-\frac{1}{n}$, we have that $$f(C_1, C_2, \dots, C_n) = \varphi_{q,C}(X_S-X_T) = \varphi_{q,C}(X_S)-\varphi_{q,C}(X_T) \not= 0,$$ in which case the algorithm has shown that $\varphi_{q,C}(X_S) \not= \varphi_{q,C}(X_T)$ and hence returns that $X_S \not= X_T$. 
		
		The algorithm generates $k$ independent $n$-tuples $C$, each leading to a proof that $X_S \not= X_T$ with probability at least $1-\frac 1n$, so the probability that it does not return $X_S \not= X_T$ after $\frac{k}{\log_2 {n}}$ iterations is at most $\frac{1}{n^\frac{k}{\log_2 {n}}}=\frac{1}{2^k}$, so it takes $O(\frac{k}{\log_2 {n}})$ iterations to achieve this desired probability. Hence, if $X_S \not= X_T$, then the algorithm will find a pair $(q,C)$ for which $\varphi_{q,C}(X_S) \not= \varphi_{q,C}(X_T)$, showing that $X_S \not= X_T$, with probability at least $1-2^{-k}$.
		
		Next, we will show that Algorithm \ref{alg:showdistinctcsfs} terminates after $O(n^3k)$ steps.
		
		It was proven in \cite{Rosser} that $\frac{x}{\log x + 2} \le \pi(x) \le \frac{x}{\log x - 4}$, where $\pi(x)$ is the number of primes less than $x$. Applying these bounds, we claim that for sufficiently large $n$ there are at least $\frac{n}{\log n}$ primes between $n^2$ and $2n^2$. We have that 
		\begin{align*}
	           \pi(2n^2)-\pi(n^2) &\ge \frac{2n^2}{2\log n + \log 2 + 2} - \frac{n^2}{2 \log n - 4} \\
	           &\ge \frac{2n^2}{2 \log n + 4} - \frac{n^2}{2 \log n - 4} \\
	           &\ge \frac{2n^2(2 \log n - 4)-n^2(2 \log n + 4)}{4(\log n)^2 - 16} \\
	           &\ge \frac{2n^2 \log n - 12n^2}{4 (\log n)^2 - 16} \\
	           &\ge \frac{n^2 (\log n - 6)}{2(\log n)^2} \\
	           &\ge \frac{n^2 }{2 (\log n)^2} \\
	           &> \frac{n}{\log n}
		\end{align*}
		for all $n > e^7$, as desired. Therefore, for sufficiently large $n$ we must test at most $n^2$ integers for primality, each of which is at most $2n^2$. As trial division can determine whether or not $n$ is prime in $O(\sqrt n)$ time, this implies that our algorithm will generate the desired $\frac{n}{\log n}$ primes in $O(n^3)$ time. 
		
		For each prime $q$ generated by our algorithm, it first generates $n$ random elements of $\ints / q \ints$, taking $O(n \log q) = O(n \log n)$ time (since $q < 2n^2 < n^3$, $\log q \le 3 \log n$). Then, there are $2$ calls to COMPUTE-EVAL-CSF on trees of size $n$ using the modulus $q$. By Lemma \ref{lem:algquadratic}, these calls require $O(n^2(\log q)^2) = O(n^2(\log n)^2)$. Thus, each iteration of the inner loop requires $O(n \log n + n^2 (\log n)^2) = O(n^2 (\log n)^2)$ time.
		
		As there are $O(\frac{k}{\log n})$ iterations for each of $O(\frac{n}{\log n})$ primes, the total runtime from the inner-loop computations (everything excluding the primality testing) is $O(n^2 (\log n)^2 \cdot \frac{k}{\log n} \cdot \frac{n}{\log n}) = O(n^3k)$.
		
		Therefore, the total runtime of Algorithm \ref{alg:showdistinctcsfs} is $O(n^3+n^3k) = O(n^3k)$.
		\end{proof}
	
	\section{Computational Results and an Additional CSF Conjecture}\label{sec4}
	We now apply a variant of Algorithm \ref{alg:computeecsf} to extend Russell's computational result in \cite{Russell} that the chromatic symmetric function distinguishes all unrooted trees on at most $25$ vertices up to trees on at most $29$ vertices.
	
	We begin by formulating a stronger form of Stanley's conjecture that the chromatic symmetric function distinguishes unrooted trees. This stronger conjecture implies the original conjecture and is less computationally expensive to verify.
	
	Let $T = (V(T), E(T))$ be a tree on $n$ vertices. As in \cite{Stanley}, for each subset $S \subset E(T)$ define $\lambda(S)$ to be the partition of $n$ where the parts have sizes equal to the numbers of vertices in the connected components of the graph $(V(T), S)$.
	
	\begin{defn} For a tree $T$ and a positive integer $k$, let $P_k(T)$ be the set of edge subsets $S$ partitioning $T$ into connected components each of size at most $k$: \[P_k(T) = \{ S | S \subseteq E(T), \lambda(S) = (a_1, \dots, a_m), a_1 \le a_2 \le \dots \le a_m \le k \} \subseteq \mathcal{P}(E(T)). \] We then define the $k$\textit{-truncated chromatic symmetric function} of $T$, denoted ${}_kX_T$, as follows: \[ {}_kX_T = \sum_{S \in P_k(T)} (-1)^{|S|} p_{\lambda(S)} \] \end{defn}
	
	Note that, by Theorem $2.5$ of \cite{Stanley}, when $k = n$ we obtain the original chromatic symmetric function $X_T$: ${}_kX_T = \sum_{S \subseteq E(T)} (-1)^{|S|} p_{\lambda(S)} = X_T$. 
	
	\begin{rmk} Our approach of considering only some of the possible subsets of edges of $T$ is similar to that used by Smith, Smith, and Tian in \cite{Smith}. There, they consider the terms in the chromatic symmetric function corresponding to subsets $S \subseteq E(T)$ such that $|S| \le k$, namely all $m$-cuts of $T$ where $m \le k$. They showed in \cite{Smith} that computing the terms in the chromatic symmetric function for all such subsets $S$ where $|S| \le 5$ suffices to prove that the chromatic symmetric function distinguishes unrooted trees on at most $24$ vertices.
	
	Here, instead of considering subsets $S \subseteq E(T)$ such that $|S| \le k$ for a constant $k$, we consider subsets $S \subseteq E(T)$ such that any subset $U \subseteq V(T)$ for which $uv \in E(T) \implies uv \in S$ for all $u,v \in U$ satisfies $|U| \le k$. In other words, the subsets we consider partition $T$ into connected components of size at most $k$. \end{rmk}
	
	\begin{lem}\label{lem:truncstrong} Let $T$ and $T'$ be trees on $n$ vertices. Then, for any positive integer $k \le n$, if $X_T = X_{T'}$ then ${}_kX_T = {}_kX_{T'}$. Therefore, if for all pairs $(T,T')$ of non-isomorphic trees on $n$ vertices we have ${}_kX_T \not= {}_k X_{T'}$ for some $k$, then we must also have $X_T \not= X_{T'}$.
	\end{lem}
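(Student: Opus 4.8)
The plan is to realize the $k$-truncated chromatic symmetric function ${}_kX_T$ as the image of the full chromatic symmetric function $X_T$ under a single fixed linear projection of the degree-$n$ part of the ring of symmetric functions. Once that is set up, the first implication is immediate, and the second statement is just its contrapositive applied pairwise.

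First I would start from the expansion $X_T = \sum_{S \subseteq E(T)} (-1)^{|S|} p_{\lambda(S)}$ of Theorem 2.5 of \cite{Stanley} and rewrite it in the $p$-basis by collecting terms according to the value of the partition: setting $c_\mu(T) = \sum_{S \subseteq E(T),\, \lambda(S) = \mu} (-1)^{|S|}$ for each $\mu \vdash n$, we get $X_T = \sum_{\mu \vdash n} c_\mu(T)\, p_\mu$. The one point that needs care is that $S \mapsto \lambda(S)$ is far from injective, so one cannot literally ``discard monomials'' of $X_T$; the argument must be phrased at the level of the $p$-basis coefficients $c_\mu(T)$, which are genuinely well-defined integers precisely because $\{p_\mu : \mu \vdash n\}$ is a basis.

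Next I would note that $P_k(T)$ is exactly the set of edge subsets $S$ for which every part of $\lambda(S)$ is at most $k$, and that for any partition $\mu$ all of whose parts are $\le k$ the conditions ``$S \in P_k(T)$ and $\lambda(S) = \mu$'' and ``$\lambda(S) = \mu$'' coincide. Hence
\[
{}_kX_T = \sum_{S \in P_k(T)} (-1)^{|S|} p_{\lambda(S)} = \sum_{\substack{\mu \vdash n \\ \mu_1 \le k}} c_\mu(T)\, p_\mu = \pi_k(X_T),
\]
where $\pi_k$ is the linear operator on degree-$n$ symmetric functions defined on the basis by $\pi_k(p_\mu) = p_\mu$ when every part of $\mu$ is $\le k$ and $\pi_k(p_\mu) = 0$ otherwise. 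Since $\pi_k$ depends only on $n$ and $k$ and not on the tree, $X_T = X_{T'}$ gives ${}_kX_T = \pi_k(X_T) = \pi_k(X_{T'}) = {}_kX_{T'}$, which is the first assertion.

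Finally, for the second assertion I would simply take the contrapositive: if for a pair $(T,T')$ of trees on $n$ vertices there is some $k \le n$ with ${}_kX_T \neq {}_kX_{T'}$, then $X_T \neq X_{T'}$; applying this to every pair of non-isomorphic trees on $n$ vertices shows the chromatic symmetric function distinguishes all of them. I do not anticipate a real obstacle here; the only subtlety, as flagged above, is to carry out the argument through the $p$-basis expansion rather than treating ${}_kX_T$ as a naive sub-sum of the set-indexed monomials of $X_T$.
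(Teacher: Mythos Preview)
Your proposal is correct and essentially identical to the paper's proof: both expand $X_T$ in the $p$-basis, observe that ${}_kX_T$ is obtained by keeping only those $p_\mu$ with all parts of $\mu$ at most $k$, and conclude that ${}_kX_T$ is a fixed function (the paper's $\delta_k$-weighting, your projection $\pi_k$) of $X_T$. The only cosmetic difference is that you name the map $\pi_k$ explicitly as a linear operator, while the paper phrases it via the indicator $\delta_k(\lambda)$ on coefficients.
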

	\begin{proof}
	There exist coefficients $a_{\lambda}(T) \in \integers$ for each partition $\lambda \vdash n$ such that \[ X_T = \sum_{\lambda \vdash n} a_{\lambda}(T) p_{\lambda}, \] all of which are determined by the chromatic symmetric function $X_T$. For each partition $\lambda \vdash n$, define $\delta_k(\lambda) =1$ if all parts of lambda are at most $k$, and $\delta_k(\lambda) = 0$ otherwise. Then, let $b_{\lambda}(T) = \delta_k(\lambda)a_{\lambda}(T)$. Since $a_{\lambda}(T)$ is determined by $X_T$ for each $\lambda$, $b_{\lambda}(T)$ is also determined by $X_T$. Note that \[{}_kX_T = \sum_{S \in P_k} (-1)^{|S|} p_{\lambda(S)} = \sum_{\lambda \vdash n, \delta_k(\lambda) = 1} a_{\lambda}(T)p_{\lambda} = \sum_{\lambda \vdash n} \delta_{k}(\lambda) a_{\lambda}(T) p_{\lambda} = \sum_{\lambda \vdash n} b_{\lambda}(T) p_{\lambda}, \] so ${}_kX_T$ is a function of $X_T$. 
	
	Therefore, we have that if $X_{T} = X_{T'}$, then ${}_kX_T = {}_kX_{T'}$, as desired.
	\end{proof}
	
	\begin{rmk}
	    For $q \in \nats$ and any $C \in (\ints / q \ints)^n$ such that $C_j = 0$ for all $j > k$, we have that $\varphi_{q,C}({}_kX_T) = \varphi_{q,C}(X_T)$.
	\end{rmk}
	
	Performing Algorithm \ref{alg:computeecsf} using an $n$-tuple $C$ for which all but the first $k$ entries are zero allows us to omit the terms $r_{k+1}, \dots, r_n$ in the intermediate step of computing the $C$-evaluated symmetric function sequence (Algorithm \ref{alg:computeesfs}). Furthermore, since for such an $n$-tuple $C$ each term $r_i$ with $i \ge k$ from a previous recursive call in Algorithm \ref{alg:computeesfs} only affects terms $r_j$, where $j \ge i \ge k$, in future calls, these remaining terms can be omitted in each call to \ref{alg:computeesfs}. By avoiding the computation of these superfluous terms, we can improve the runtime of Algorithm \ref{alg:computeecsf} in this special case from $O(n^2(\log q)^2)$ to $O(nk(\log q)^2)$. 
	
	By Lemma \ref{lem:truncstrong} (and also by Lemma \ref{lem:homom-ver} using $C = (C_1, \dots, C_k, 0, 0, \dots, 0)$) to show that $X_T$ distinguishes all trees on at most $n$ vertices up to isomorphism, it suffices to show that ${}_kX_T$ distinguishes all such trees, for a constant $k$. 
	
	Setting $k = 3$, we obtained the following computational result:
	
	\begin{thm}
	    The $3$-truncated chromatic symmetric function distinguishes all  trees on at most $29$ vertices up to isomorphism. Hence, the chromatic symmetric function distinguishes all non-isomorphic trees on up through $29$ vertices.
	\end{thm}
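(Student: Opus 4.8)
The statement is a finite computation, so the plan is to describe a rigorous verification procedure and record that it succeeds. First I would reduce the problem. The chromatic symmetric function of an $n$-vertex tree lies in the span of $\{p_\lambda : \lambda \vdash n\}$, so trees on different numbers of vertices automatically have distinct CSFs, and likewise distinct $3$-truncated CSFs; therefore it suffices to verify, for each fixed $n \in \{1, \dots, 29\}$ separately, that ${}_3X_T \ne {}_3X_{T'}$ for every pair of non-isomorphic trees $T, T'$ on $n$ vertices. Since isomorphic trees have equal CSF, it is enough to handle one representative from each isomorphism class. By Lemma \ref{lem:truncstrong}, once the $3$-truncated CSFs are pairwise distinct the full CSFs are pairwise distinct, which gives the second sentence of the theorem.

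For each $n$, the plan is: (i) generate all non-isomorphic $n$-vertex trees with a standard isomorph-free tree generator, cross-checking the output count against the known number of trees on $n$ vertices; (ii) fix, as in the proof of the main theorem, a small collection of primes $q_1, \dots, q_\ell$, each larger than $n^2$ and with product exceeding $2^n$, together with, for each $q_i$, several tuples $C^{(i)} \in (\ints/q_i\ints)^n$ supported on their first three coordinates; (iii) for every tree $T$, compute the signature $\mathrm{sig}(T) = (\varphi_{q_i, C^{(i)}}({}_3X_T))_{i}$ using the specialization of Algorithm \ref{alg:computeesfs} to $k = 3$ described in the discussion preceding this theorem, which costs $O(n (\log q_i)^2)$ per evaluation; and (iv) sort the trees by signature and examine the collision classes. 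If every signature is distinct, then since each $\varphi_{q,C}$ is a well-defined function, distinct signatures force ${}_3X_T \ne {}_3X_{T'}$ for all non-isomorphic pairs, and the verification for that $n$ is complete.

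To keep the argument rigorous regardless of how the random $C^{(i)}$ happen to behave, any collision class of size greater than one is resolved by exact computation: for each colliding pair I would compute ${}_3X_T$ and ${}_3X_{T'}$ exactly in the $p$-basis — feasible because the number of partitions of $n$ with all parts at most $3$ is only $\Theta(n^2)$ — and compare coefficients. If they differ, that pair is verified directly; if they agree, then ${}_3X_T = {}_3X_{T'}$, which would be a genuine counterexample to the strong conjecture (and would prompt comparing the full $X_T$ and $X_{T'}$). The content of the theorem is precisely that, run for all $n \le 29$ with $k = 3$, this procedure turns up no such coincidence. The evaluation layer is only an optimization that keeps the collision classes tiny: by the coefficient bound used in the proof of the main theorem, the coefficients of ${}_3X_T - {}_3X_{T'}$ have absolute value at most $2^n$, so some $q_i$ does not divide them all, and Lemma \ref{lem:probbound} then shows a random $C^{(i)}$ separates the pair with probability at least $1 - n/q_i$; a handful of tuples per prime therefore makes nonsingleton collision classes rare, so the (cheap) exact fallback is invoked only occasionally.

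The main obstacle is scale, not mathematical subtlety: the number of non-isomorphic trees on $29$ vertices is several billion, so an $O(N^2)$ pairwise comparison is out of the question and even sorting $N$ signatures in memory is demanding. The design above avoids the quadratic blowup by reducing the comparison to sorting or hashing short integer signatures, each computable in near-linear time in $n$ by the truncated algorithm, and the computation is partitioned over the $29$ values of $n$ and, for the largest $n$, parallelized across machines and streamed so that the full list of signatures need not reside in memory at once. The residual risks — an error in the tree generator or in the modular arithmetic — are controlled by matching the generated tree counts against the known values and by using several independent $(q,C)$ choices, so that an accidental coincidence of signatures is vanishingly unlikely to mask a true distinction.
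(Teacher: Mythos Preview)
Your proposal is correct and follows essentially the same approach as the paper: both verify the statement by computing, for each tree generated by an isomorph-free enumerator, a short vector of values $\varphi_{q_i,C_i}(X_T)$ with each $C_i$ supported on its first three coordinates, and then checking that these signatures are pairwise distinct for each $n\le 29$. The only procedural differences are that the paper refines equivalence classes one homomorphism at a time rather than computing the full signature vector upfront, and that it does not describe your exact $p$-basis fallback, instead simply reporting that the refinement terminated with all classes being singletons.
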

	
	Using a computer, we generated primes $q_1, q_2, \dots, q_m$ and tuples $C_1, C_2, \dots, C_m$ of the form $C_j = (x_j, y_j, z_j, 0, 0, \dots)$, where $x_j, y_j, z_j \in (\ints / q_j \ints)$. Then, using Keeler Russell's C++ library (which is provided in \cite{Russell}) to generate all non-isomorphic trees on $n$ vertices, we successively computed $\varphi_{q_1,C_1}(T)$ for each tree $T$. For the elements of each equivalence class $S_r$ of trees $T$ such that $\varphi_{q_1,C_1}(T) = r$, we computed $\varphi_{q_2,C_2}(T)$ and generated smaller equivalence classes of trees with each ordered pair of images under the homomorphisms $\varphi_{q_1,C_1}$ and $\varphi_{q_2,C_2}$. We then repeated this process until each equivalence class contained only a single tree, which occurred after a finite sequence of homomorphisms $\varphi_{q_1,C_1}, \dots, \varphi_{q_m,C_m}$ were applied.
	
	The computer program we used to verify that the CSF distinguishes trees on at most $29$ vertices is available at \url{https://github.com/VietaFan/CSFTreeConjecture}. 
	
	This computational result leads to the following question:
	\begin{ques}
	    Does the $3$-truncated chromatic symmetric function distinguish all unrooted trees up to isomorphism? 
	\end{ques}
	
	
	\section{Acknowledgements}
	
	We would like to thank Dr. John Shareshian for many helpful discussions on the theory of chromatic symmetric functions that led to this paper. 
	
	We would also like to thank Keeler Russell for making the code he used to verify that the chromatic symmetric function distinguishes trees on at most 25 vertices freely available on Github. His library for sequentially generating all non-isomorphic free trees on $n$ vertices was particularly useful for our verification that the CSF distinguishes all trees on at most 29 vertices.

\end{document}